\newtheorem{thm}{Theorem}
\newtheorem{prop}[thm]{Proposition}
\newtheorem{lem}[thm]{Lemma}
\newtheorem{cor}[thm]{Corollary}
\theoremstyle{remark}
\newtheorem{rem}[thm]{Remark}
\newcommand{\FF}{\mathbb{F}}
\newcommand{\LL}{\mathcal{L}}
\DeclareMathOperator{\Aut}{Aut}
\DeclareMathOperator{\wt}{wt}
\DeclareMathOperator{\supp}{supp}
\begin{document}
\title{On $s$-extremal singly even self-dual 
$[24k+8,12k+4,4k+2]$ codes\footnote{Keywords: 
extremal doubly even self-dual code, 
$s$-extremal singly even self-dual code,
covering radius.
Mathematics Subject Classification: 94B05, 94B75.}
}

\author{
Masaaki Harada\thanks{
Research Center for Pure and Applied Mathematics,
Graduate School of Information Sciences,
Tohoku University, Sendai 980--8579, Japan.}
and 
Akihiro Munemasa\thanks{
Research Center for Pure and Applied Mathematics,
Graduate School of Information Sciences,
Tohoku University,
Sendai 980--8579, Japan.}
}

\maketitle

\begin{abstract}
A relationship between $s$-extremal singly even self-dual 
$[24k+8,12k+4,4k+2]$ codes and extremal doubly even self-dual 
$[24k+8,12k+4,4k+4]$ codes 
with covering radius meeting the Delsarte bound,
is established.
As an example of the relationship, 
$s$-extremal singly even self-dual $[56,28,10]$ codes are constructed
for the first time.
In addition, 
we show that there is no extremal doubly even self-dual code of 
length $24k+8$ with covering radius meeting the 
Delsarte bound for $k \ge 137$.
Similarly, we show that there is no extremal doubly even self-dual
code of length $24k+16$ with covering radius meeting 
the Delsarte bound for $k \ge 148$.
We also determine the covering radii of some 
extremal doubly even self-dual codes of length $80$.
\end{abstract}

\section{Introduction}
A (binary) $[n,k]$ {\em code} $C$ is a $k$-dimensional vector subspace
of $\FF_2^n$,
where $\FF_2$ denotes the finite field of order $2$.
All codes in this paper are binary.
The parameter $n$ is called the {\em length} of $C$.
The {\em weight} $\wt(x)$ of a vector $x \in \FF_2^n$ is
the number of non-zero components of $x$.
A vector of $C$ is called a {\em codeword} of $C$.
The minimum non-zero weight of all codewords in $C$ is called
the {\em minimum weight} of $C$ and an $[n,k]$ code with minimum
weight $d$ is called an $[n,k,d]$ code.
The {\em automorphism group} of $C$ consists of all permutations
of the coordinates of $C$ which preserve $C$.
Two codes are {\em equivalent} if one can be
obtained from the other by permuting the coordinates.

The {\em dual} code $C^{\perp}$ of a code
$C$ of length $n$ is defined as
$
C^{\perp}=
\{x \in \FF_2^n \mid x \cdot y = 0 \text{ for all } y \in C\},
$
where $x \cdot y$ is the standard inner product.
A code $C$ is called {\em self-dual}
if $C=C^\perp$. 
The minimum weight $d(C)$ of a self-dual code $C$ of length
$n$ is bounded by $d(C)\le 4 \lfloor n/24 \rfloor +4$ unless $n
\equiv 22 \pmod{24}$ when $d(C) \le 4 \lfloor n/24 \rfloor+6$~\cite{Rains}.
We say that a self-dual
code meeting the upper bound is {\em extremal}.

Self-dual codes are divided into two classes, namely,
doubly even self-dual codes and singly even self-dual codes.
A self-dual code is called
{\em doubly even} and {\em singly even}
if all codewords have weight
$\equiv 0 \pmod 4$ and if some codeword has weight
$\equiv 2 \pmod 4$, respectively.
A doubly even self-dual code of length $n$ exists if and
only if $n \equiv 0 \pmod 8$, while a singly even self-dual code
of length $n$ exists if and
only if $n$ is even.

Let $C$ be a singly even self-dual code and let $C_0$ denote the
subcode of codewords having weight $\equiv0\pmod4$. Then $C_0$ is
a subcode of codimension $1$. The {\em shadow} $S$ of $C$ is
defined to be $C_0^\perp \setminus C$. 
Shadows were introduced by Conway and Sloane~\cite{C-S}, in order
to provide restrictions on the weight enumerators of singly even
self-dual codes. 
The minimum weight $d(D)$ of a coset $D$ is the minimum non-zero weight of
all vectors in the coset.
Let $C$ and $S$ be a singly even 
self-dual code of length $n$ and its shadow, respectively.
Bachoc and Gaborit~\cite{BG} showed that 
\begin{equation}\label{eq:BG}
d(S) \le \frac{n}{2}+4-2d(C),
\end{equation}
unless $n \equiv 22 \pmod{24}$ and $d(C)= 4 \lfloor n/24 \rfloor +6$,
in which case $d(S) = \frac{n}{2}+8-2d(C)$.
A singly even self-dual code $C$ meeting the bound
is called {\em $s$-extremal}.
The existence of many $s$-extremal singly even self-dual codes 
was mentioned in~\cite{BG}.
For minimum weight $10$, 
it is known that there is an 
$s$-extremal singly even self-dual code
of lengths $46,50,52,54,58$ (see~\cite[Section 7]{BG}).

The {\em covering radius} $R(C)$ of a code $C$ of length $n$
is the smallest integer $R$ such that spheres of radius $R$
around codewords of $C$ cover the space $\FF_2^n$.
It is known that
the covering radius $R(C)$ is the same as the largest value
among the minimum weights of nontrivial cosets 
(see~\cite[Theorem~13 (i)]{Pless}).
The covering radius is a basic and important geometric parameter of
a code.
Let $C$ be a code of length $n$ and 
let $A^\perp_i$ be the number of vectors of weight $i$ in $C^\perp$.
It was shown in~\cite{D} that
$R(C) \le |\{i \mid A^\perp_i \ne 0, 0 < i \le n\}|$.
The bound is called the Delsarte bound (see~\cite{A-P}).
Much work has been done concerning covering radii of 
extremal doubly even self-dual codes
(see e.g., \cite{A-P}, \cite{HM07}, \cite{Ozeki}, \cite{Tsai-D11}).

In this paper, 
a relationship between $s$-extremal singly even self-dual 
$[24k+8,12k+4,4k+2]$ codes and extremal doubly even self-dual 
$[24k+8,12k+4,4k+4]$ codes 
with covering radius meeting the Delsarte bound,
is established.
As an example of the relationship, 
we actually construct $s$-extremal singly even
self-dual $[56,28,10]$ codes from some
extremal doubly even self-dual $[56,28,12]$ 
codes with covering radius $10$.
The existence of such self-dual codes was not previously known.
We also determine the covering radii of some known
extremal doubly even self-dual codes of length $80$, along with
two new extremal doubly even self-dual codes of the same length.
It is shown that there is no
$s$-extremal singly even self-dual $[24k+8,12k+4,4k+2]$ code
for $k \ge 137$.
By the above relationship, 
there is no extremal doubly even self-dual code of length
$24k+8$ with covering radius meeting the 
Delsarte bound for $k \ge 137$.
Finally, we show that there is no $s$-extremal singly even 
self-dual $[24k+16,12k+8,4k+4]$ code for $k \ge 148$.
By Proposition~4 in~\cite{HM07},
there is no extremal doubly even self-dual code
of length $24k+16$ with covering radius meeting the Delsarte bound
for $k \ge 148$.

All computer calculations in this paper
were done by {\sc Magma}~\cite{Magma}.

\section{A relationship between $s$-extremal self-dual 
$[24k+8,12k+4,4k+2]$ codes and certain extremal doubly even self-dual codes}

In this section, a relationship between $s$-extremal singly even self-dual 
$[24k+8,12k+4,4k+2]$ codes and extremal doubly even self-dual 
$[24k+8,12k+4,4k+4]$ codes with covering radius $4k+2$, 
is established.

Let $C$ be a singly even self-dual code and let $C_0$ denote the
subcode of codewords having weight $\equiv0\pmod4$. 
Then there are cosets
$C_1,C_2,C_3$ of $C_0$ such that $C_0^\perp = C_0 \cup C_1 \cup
C_2 \cup C_3 $, where $C = C_0  \cup C_2$ and $S = C_1 \cup C_3$.
Two self-dual codes $C$ and $C'$ of length $n$
are said to be {\em neighbors} if $\dim(C \cap C')=n/2-1$. 
If $C$ is a singly even
self-dual code of length divisible by $8$, then $C$ has two doubly
even self-dual neighbors, namely $C_0 \cup C_1$ and $C_0 \cup C_3$
(see~\cite{BP}).


\begin{thm}\label{thm}
If $C$ is an $s$-extremal singly even self-dual $[24k+8,12k+4,4k+2]$ code,
then the two doubly even self-dual neighbors  $C_0 \cup C_1$ and $C_0 \cup C_3$
are extremal and their covering radii are $4k+2$, meeting the Delsarte bound.
Conversely, if $D$ is an extremal doubly even self-dual
$[24k+8,12k+4,4k+4]$ code
with covering radius $4k+2$,
then $D$ has an
$s$-extremal singly even self-dual $[24k+8,12k+4,4k+2]$ neighbor.
\end{thm}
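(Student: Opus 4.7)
The plan is to handle the two implications separately, in each case exploiting the coset decomposition $C_0^\perp = C_0 \cup C_1 \cup C_2 \cup C_3$ and tracking minimum weights through the four cosets. The forward direction is essentially weight bookkeeping together with the Delsarte bound; the converse hinges on constructing the right singly even neighbor from a deep coset of $D$.

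For the forward direction, suppose $C$ is $s$-extremal with $d(C) = 4k+2$ and $d(S) = 4k+4$. Since $C_0$ is doubly even and $d(C_0) \geq d(C) = 4k+2$, divisibility forces $d(C_0) \geq 4k+4$; similarly $d(C_i) \geq d(S) = 4k+4$ for $i \in \{1,3\}$. Hence each neighbor $C_0 \cup C_i$ has minimum weight at least $4k+4$ and, matching the extremal bound, is extremal. The weights of such a code lie in $\{0, 4k+4, 4k+8, \ldots, 20k+4, 24k+8\}$, giving at most $4k+2$ distinct nonzero weights, so the Delsarte bound yields $R(C_0 \cup C_i) \leq 4k+2$. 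To see equality, pick $v \in C_2$ with $\wt(v) = 4k+2$; the coset $(C_0 \cup C_1) + v = C_2 \cup C_3$ has minimum weight $\min(d(C_2), d(C_3)) = 4k+2$ (since $d(C_3) \geq d(S) = 4k+4$), exhibiting the required deep coset. A symmetric argument using the coset $(C_0 \cup C_3) + v$ handles the other neighbor.

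For the converse, let $D$ be extremal doubly even self-dual with $R(D) = 4k+2$. Pick a coset $D+x$ of minimum weight $4k+2$, let $v$ be a coset leader so that $\wt(v) = 4k+2$ and $v \notin D$, and set $C_0 = D \cap v^\perp$ and $C = \langle C_0, v \rangle$. Since $v \notin D^\perp = D$, the hyperplane $v^\perp$ cuts $D$ in codimension one; combined with $\wt(v) \equiv 0 \pmod 2$ and $v \perp C_0$, the code $C$ is self-orthogonal of dimension $12k+4$ and hence self-dual, and $\wt(v) \equiv 2 \pmod 4$ makes it singly even. The inequalities $d(C_0) \geq d(D) = 4k+4$ and $d(C_0 + v) \geq d(D+v) = 4k+2$ then give $d(C) = 4k+2$, attained by $v$.

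The crux is the shadow bound $d(S) \geq 4k+4$. Writing $S = C_1 \cup C_3$ with $C_1 = C_0 + a$ and $C_3 = C_0 + a + v$ for some $a \in D \setminus C_0$, one has $d(C_1) \geq d(D) = 4k+4$ immediately. The key observation is that $C_0 \cup C_3$ is automatically doubly even self-dual: any $a \in D \setminus C_0$ satisfies $a \cdot v = 1$ by definition of $C_0$, so $\wt(a+v) \equiv \wt(a) + \wt(v) - 2 \equiv 0 \pmod 4$, and a short computation using $a, v \in C_0^\perp$ extends this to every element of $C_0 + a + v$. Hence $d(C_3) \equiv 0 \pmod 4$, while the inclusion $C_3 \subset D + v$ gives $d(C_3) \geq d(D+v) = 4k+2$; together these force $d(C_3) \geq 4k+4$. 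Thus $d(S) \geq 4k+4$, and the Bachoc--Gaborit bound $d(S) \leq 4k+4$ yields equality, so $C$ is $s$-extremal. I expect the verification that double-evenness is inherited by $C_0 \cup C_3$ to be the main technical point; the rest is straightforward coset arithmetic modulo $4$.
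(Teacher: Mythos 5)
Your proof is correct and follows essentially the same route as the paper: minimum-weight bookkeeping on the four cosets of $C_0$ plus the Delsarte bound for the forward direction, and for the converse the neighbor $C_0\cup(C_0+v)$ built from a coset leader $v$ of weight $4k+2$, with the Bachoc--Gaborit bound forcing $d(S)=4k+4$. The only difference is that where the paper cites a lemma of Brualdi and Pless for the weight classes of the cosets $D_1$ and $D_3$, you verify the mod-$4$ weight congruences directly.
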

\begin{proof}
Suppose that $C$ is an $s$-extremal singly even self-dual $[24k+8,12k+4,4k+2]$ code.
Since $d(C_i) \ge 4k+4$ $(i=0,1,3)$, 
$C_0 \cup C_1$ and $C_0 \cup C_3$ have minimum weight at least $4k+4$.
By the bound on the minimum weight, 
both $C_0 \cup C_1$ and $C_0 \cup C_3$ have minimum weight $4k+4$,
that is, the codes are extremal.
The sets $C_2 \cup C_3$ and $C_1 \cup C_2$ are cosets of 
$C_0 \cup C_1$ and $C_0 \cup C_3$, respectively.
Since $d(C_2)=4k+2$, $d(C_2 \cup C_3) = d(C_1 \cup C_2) = 4k+2$.
Hence, both $C_0 \cup C_1$ and $C_0 \cup C_3$ have covering radius
at least $4k+2$.
By the Delsarte bound, 
both $C_0 \cup C_1$ and $C_0 \cup C_3$ have covering radius $4k+2$.

Conversely, suppose that $D$ is an extremal doubly even self-dual
$[24k+8,12k+4,4k+4]$ code with covering radius $4k+2$.
Then there is a coset $v+D$ of minimum weight $4k+2$.
Set $D_0=\{x \in D \mid v \cdot x =0\}$, which is a subcode of index $2$.
Then there are cosets
$D_1,D_2,D_3$ of $D_0$ such that $D_0^\perp = D_0 \cup D_1 \cup
D_2 \cup D_3 $, where $D= D_0  \cup D_2$, $v+D=D_1  \cup D_3$,
$D_1$ consists of the vectors whose weight is congruent to $2 \pmod 4$ and
$D_3$ consists of the vectors whose weight is congruent to
$0 \pmod 4$~\cite[Lemma~3]{BP}.
In addition, $D_0  \cup D_1$ is a singly even self-dual code with shadow
$D_2  \cup D_3$.
Since $d(D_1)  = 4k+2$,
$d(D_2) \ge 4k+4$ and $d(D_3) \ge 4k+4$,
$D_0  \cup D_1$ is a singly even self-dual $[24k+8,12k+4,4k+2]$ code 
with shadow of minimum weight at least $4k+4$.
By the upper bound~\eqref{eq:BG},
the minimum weight of the shadow is exactly $4k+4$.
Of course, $D_0  \cup D_1$ is a neighbor of $D$.
\end{proof}

\begin{rem}\label{rem}
A similar result can be found in~\cite[Proposition~4]{HM07} for 
$s$-extremal singly even self-dual $[24k+16,12k+8,4k+4]$ codes
(see also Lemma~\ref{lem:24k+16}).
\end{rem}

Let $A_i$ and $B_i$ denote the number of codewords of
weight $i$ in a singly even self-dual code $C$ of length $n$ and 
the number of vectors of weight $i$ in its shadow $S$, respectively.
The weight enumerators of $C$ and $S$ are given by
$\sum_{i=0}^{n} A_i y^i$ and $\sum_{i=0}^{n} B_i y^i$, 
respectively.
The weight enumerators of an $s$-extremal singly even self-dual 
$[n,n/2,d]$ code and its shadow
are uniquely determined~\cite{BG}.

\begin{cor}\label{cor}
If $C$ is an $s$-extremal singly even self-dual $[24k+8,12k+4,4k+2]$ code,
then the weight enumerator of a coset of minimum weight $4k+2$
in the two doubly even self-dual neighbors 
$C_0 \cup C_1$ and $C_0 \cup C_3$ is given by
\begin{equation}\label{eq:W}
\sum_{i \equiv 2 \pmod 4} A_iy^i 
+ \sum_{i} \frac{B_i}{2} y^i.
\end{equation}
\end{cor}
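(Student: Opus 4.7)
The plan is to identify, inside $C_0^\perp$, the unique non-trivial coset of each doubly even neighbor — this will be the distinguished coset of minimum weight $4k+2$ produced in the proof of Theorem~\ref{thm} — to express its weight enumerator in terms of the weight enumerator of $C_2$ and of a ``half-shadow,'' and then to show that the two halves of the shadow have equal weight enumerators.

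First I would observe that since $C_0^\perp = C_0 \cup C_1 \cup C_2 \cup C_3$ and $C_0\cup C_1$ has index $2$ in $C_0^\perp$, the only non-trivial coset of $C_0\cup C_1$ inside $C_0^\perp$ is $C_2\cup C_3$; analogously the only non-trivial coset of $C_0\cup C_3$ in $C_0^\perp$ is $C_1\cup C_2$. Since $C_2$ consists of the codewords of $C$ of weight $\equiv 2\pmod 4$, its weight enumerator is $\sum_{i\equiv 2\pmod 4} A_iy^i$, and since by $s$-extremality $C_3\subset S$ has minimum weight at least $d(S)=4k+4$, the coset $C_2\cup C_3$ has minimum weight exactly $4k+2$ — matching the covering radius computed in Theorem~\ref{thm}. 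Writing $W_{C_j}(y)$ for the weight enumerator of $C_j$, the weight enumerator of $C_2\cup C_3$ is
$$\sum_{i\equiv 2\pmod 4} A_iy^i + W_{C_3}(y),$$
and similarly that of $C_1\cup C_2$ is $\sum_{i\equiv 2\pmod 4} A_iy^i + W_{C_1}(y)$.

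The claim therefore reduces to showing $W_{C_1}(y)=W_{C_3}(y)=\sum_i \frac{B_i}{2} y^i$. Since $S=C_1\cup C_3$ is a disjoint union with weight enumerator $\sum_i B_iy^i$, it suffices to prove $W_{C_1}=W_{C_3}$; this is where the real force of the argument lies, and I would deduce it from the uniqueness of the weight enumerator of an extremal doubly even self-dual code of a given length (a classical consequence of Gleason's theorem). By Theorem~\ref{thm}, both $C_0\cup C_1$ and $C_0\cup C_3$ are extremal doubly even self-dual $[24k+8,12k+4,4k+4]$ codes, so they share a common weight enumerator. Writing each as $\sum_{i\equiv 0\pmod 4} A_iy^i + W_{C_j}(y)$ for $j=1,3$ and cancelling the common first summand (which is just the weight enumerator of $C_0$) yields $W_{C_1}=W_{C_3}$, and the formula~\eqref{eq:W} follows. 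I do not expect any genuine obstacle: once the uniqueness of the extremal doubly even weight enumerator is invoked, the remainder is straightforward accounting for the coset decomposition of $C_0^\perp$.
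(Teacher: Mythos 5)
Your proposal is correct and follows essentially the same route as the paper: both compute the weight enumerator of the distinguished cosets $C_2\cup C_3$ and $C_1\cup C_2$ and deduce $W_{C_1}=W_{C_3}=\tfrac12\sum_i B_iy^i$ from the uniqueness of the weight enumerator of an extremal doubly even self-dual code. The only step the paper adds, which you omit, is an appeal to Corollary~1 to Theorem~1 of Assmus--Pless to conclude that \emph{every} coset of minimum weight $4k+2$ (not just the one you exhibit) has this same weight enumerator, which is needed under the reading that the corollary describes an arbitrary such coset.
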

\begin{proof}
By Theorem~\ref{thm},
$C_2 \cup C_3$ (resp.\ $C_2 \cup C_1$) is a coset of minimum weight
$4k+2$ in $C_0 \cup C_1$ (resp.\ $C_0 \cup C_3$).
The weight enumerator of an extremal doubly even self-dual code
of length $n$ is uniquely determined.
Since $C_0 \cup C_1$ and $C_0 \cup C_3$ are extremal doubly even 
self-dual codes, $C_1$ and $C_3$ have
the identical weight enumerators.
Hence, the weight enumerators of $C_2 \cup C_3$ and $C_2 \cup C_1$ 
are given by~\eqref{eq:W}.
By Corollary~1 to Theorem~1 in~\cite{A-P}, 
the cosets of minimum weight $4k+2$, if there are any, 
have unique weight enumerators.
\end{proof}

As an example of Theorem~\ref{thm}, we consider the case $k=1$.
There are five inequivalent 
extremal doubly even self-dual codes of length $32$~\cite{CPS}.
These codes are denoted by C81, C82, C83, C84 and C85 
in~\cite[Table~A]{CPS}.
Every extremal doubly even self-dual code of length $32$
has covering radius $6$~\cite[Theorem~3]{A-P}.
These codes contain
$11253,14756,12236,11354$ and $11321$ cosets of minimum
weight $6$, respectively~\cite[Table~II]{A-P} and 
\cite[Tables~1, 2, 6, 8]{CCM}.
By Theorem~\ref{thm},
$11253,14756,12236,11354$ and $11321$ $s$-extremal singly even
self-dual $[32,16,6]$ codes are obtained, respectively.
We verified by {\sc Magma}~\cite{Magma} 
that the $11253,14756,12236,11354$ and $11321$ codes are divided into 
$6, 2, 3, 7$ and $10$ inequivalent codes,
respectively.
By determining equivalence of the $28$ codes,
there are $19$ inequivalent
$s$-extremal singly even self-dual $[32,16,6]$ codes. 
This classification coincides with the result 
listed in~\cite[p.~29]{BG}.


\section{Construction of $s$-extremal singly even 
self-dual $[56,28,10]$ codes}\label{sec:56}

In this section, we consider the case $k=2$.
Some $s$-extremal singly even self-dual $[56,28,10]$ codes
are constructed.  
The existence of such self-dual codes was not previously known
(see~\cite{BG}).

%
By using~\cite[(10), (11)]{C-S},
the weight enumerators of an $s$-extremal singly even self-dual
$[56,28,10]$ code and its shadow
are determined as follows:
\begin{align*}
&1
+ 308 y^{10}
+ 3990 y^{12}
+ 42900 y^{14}
+ 311850 y^{16}
+ 1583120 y^{18}
+ 5847688 y^{20}
\\ &
+ 15961680 y^{22}
+ 32458965 y^{24}
+ 49520856 y^{26}
+ 56972740 y^{28}
+ \cdots,
\\ &
8400 y^{12}
+ 620928 y^{16}
+ 11704000 y^{20}
+ 64901760 y^{24}
+ 113965280 y^{28}
+ \cdots,
\end{align*}
respectively.

The existence of extremal doubly even self-dual codes of length $56$
with covering radius $10$ is known~\cite{HM07}, \cite{Ozeki} and 
\cite{Tsai-D11}.
Theorem~\ref{thm} gives the existence of an $s$-extremal singly even
self-dual $[56,28,10]$ code.
As an example, we actually construct $s$-extremal singly even
self-dual $[56,28,10]$ codes from 
the extremal double circulant doubly even self-dual $[56,28,12]$ 
code $C_{56,1}$ in~\cite{HGK-DCC} and 
the extremal doubly even self-dual $[56,28,12]$ code in~\cite{BYB}, 
which have covering radius $10$.
We denote the code in~\cite{BYB} by $BYB$.
Ozeki~\cite{Ozeki} determined the complete coset weight distributions of 
$C_{56,1}$ and $BYB$.
The codes contain $2925$ and $6552$  cosets of minimum weight 
$10$, respectively~\cite{Ozeki}.
By Theorem~\ref{thm},
$2925$ and $6552$ $s$-extremal singly even
self-dual $[56,28,10]$ codes are obtained, respectively.
We verified by {\sc Magma}~\cite{Magma} 
that the $2925$ codes are divided into $70$ inequivalent codes.
We denote the new codes by $D_1,D_2,\ldots,D_{70}$.
These codes are constructed as 
$(C_{56,1})_0  \cup ( v+(C_{56,1})_0 )$, where
$(C_{56,1})_0=\{x \in C_{56,1} \mid v \cdot x =0\}$
and the supports $\supp(v)$ of $v$ are listed in Table~\ref{Tab:56}.
The orders $|\Aut|$ of the automorphism groups 
calculated by {\sc Magma}~\cite{Magma} 
are  listed in Table~\ref{Tab:56Aut}.
We verified by {\sc Magma}~\cite{Magma} that the automorphism group 
of the code $BYB$ acts on the $6552$ cosets of minimum weight 
$10$ transitively.
This means that the $6552$ $s$-extremal singly even
self-dual $[56,28,10]$ codes are equivalent.
The unique $s$-extremal code $E$ is constructed as 
$(BYB)_0  \cup ( v+(BYB)_0 )$, where
$(BYB)_0=\{x \in BYB \mid v \cdot x =0\}$
and 
\[
\supp(v)=\{1, 2, 3, 17, 21, 22, 28, 37, 51, 55\}.
\]
We verified by {\sc Magma}~\cite{Magma} 
that the code $E$ has automorphism group of order $9$.

\begin{table}[thbp]
\caption{New $s$-extremal singly even self-dual $[56,28,10]$ codes}
\label{Tab:56}
\begin{center}
{\footnotesize
\begin{tabular}{c|l|c|l}
\noalign{\hrule height0.8pt}
$i$ &  \multicolumn{1}{c|}{$\supp(v)$}  & 
$i$ &  \multicolumn{1}{c}{$\supp(v)$} \\
\hline
 1&$\{1,2,3,4,5,9,10,24,54,55\}$ &
 2&$\{1,2,3,6,9,10,21,23,36,45\}$ \\
 3&$\{1,2,3,7,11,16,32,33,46,51\}$ &
 4&$\{1,2,3,7,12,13,14,31,37,46\}$ \\
 5&$\{1,2,3,4,5,15,49,50,53,55\}$ &
 6&$\{1,2,3,19,24,26,39,40,44,48\}$ \\
 7&$\{1,2,3,18,25,36,38,39,42,53\}$ &
 8&$\{1,2,3,6,15,27,36,40,45,46\}$ \\
 9&$\{1,2,3,6,27,42,43,46,55,56\}$ &
10&$\{1,2,3,9,11,33,40,42,47,50\}$ \\
11&$\{1,2,4,6,10,12,24,31,46,52\}$ &
12&$\{1,2,3,7,15,25,30,33,43,51\}$ \\
13&$\{1,2,3,22,23,25,31,41,45,53\}$ &
14&$\{1,2,3,14,20,28,30,35,41,53\}$ \\
15&$\{1,2,4,7,17,23,28,42,54,55\}$ &
16&$\{1,2,3,20,24,26,29,34,38,50\}$ \\
17&$\{1,2,4,10,11,25,27,30,37,52\}$ &
18&$\{1,2,4,7,18,20,25,30,32,36\}$ \\
19&$\{1,2,3,12,16,21,24,30,31,48\}$ &
20&$\{1,2,3,12,16,24,29,35,36,41\}$ \\
21&$\{1,2,3,12,19,24,30,34,39,56\}$ &
22&$\{1,2,3,5,10,25,30,36,45,55\}$ \\
23&$\{1,2,3,6,15,20,23,27,33,42\}$ &
24&$\{1,2,3,23,25,29,31,39,47,53\}$ \\
25&$\{1,2,3,18,21,24,44,47,52,53\}$ &
26&$\{1,2,3,4,9,27,29,32,36,37\}$ \\
27&$\{1,2,3,7,15,18,24,31,37,52\}$ &
28&$\{1,2,3,18,22,26,31,34,41,43\}$ \\
29&$\{1,2,3,4,6,16,42,45,50,52\}$ &
30&$\{1,2,3,4,11,26,27,29,32,47\}$ \\
31&$\{1,2,3,4,11,20,25,38,48,54\}$ &
32&$\{1,2,3,12,14,15,18,23,36,54\}$ \\
33&$\{1,2,3,4,24,28,31,39,47,52\}$ &
34&$\{1,2,3,4,14,19,35,39,48,49\}$ \\
35&$\{1,2,3,7,17,25,41,50,55,56\}$ &
36&$\{1,2,3,5,7,13,20,34,48,52\}$ \\
37&$\{1,2,3,7,15,25,38,40,42,43\}$ &
38&$\{1,2,3,4,9,20,46,48,53,54\}$ \\
39&$\{1,2,3,9,11,19,24,30,32,45\}$ &
40&$\{1,2,3,4,13,26,39,48,52,55\}$ \\
41&$\{1,2,3,4,11,17,28,30,46,55\}$ &
42&$\{1,2,3,8,13,20,38,41,44,56\}$ \\
43&$\{1,2,4,5,28,33,34,47,51,55\}$ &
44&$\{1,2,3,23,26,29,34,37,40,49\}$ \\
45&$\{1,2,3,6,12,22,28,31,41,51\}$ &
46&$\{1,2,4,7,18,20,26,29,43,53\}$ \\
47&$\{1,2,3,21,32,38,42,48,52,53\}$ &
48&$\{1,2,3,10,11,12,14,17,20,50\}$ \\
49&$\{1,2,3,4,6,15,19,36,45,55\}$ &
50&$\{1,2,3,5,8,11,32,34,37,42\}$ \\
51&$\{1,2,3,5,9,27,32,45,46,56\}$ &
52&$\{1,2,4,6,11,13,15,16,37,47\}$ \\
53&$\{1,2,3,4,8,16,17,19,37,55\}$ &
54&$\{1,2,3,18,23,34,36,44,47,53\}$ \\
55&$\{1,2,3,5,6,21,25,32,33,49\}$ &
56&$\{1,2,3,23,35,37,40,45,46,53\}$ \\
57&$\{1,2,3,4,24,28,37,41,45,48\}$ &
58&$\{1,2,4,7,23,29,32,37,46,50\}$ \\
59&$\{1,2,4,11,16,22,43,46,52,53\}$ &
60&$\{1,2,3,6,12,14,20,37,46,47\}$ \\
61&$\{1,2,4,5,22,29,36,46,49,54\}$ &
62&$\{1,2,3,22,23,24,35,40,50,53\}$ \\
63&$\{1,2,3,5,12,14,26,32,42,48\}$ &
64&$\{1,2,3,7,15,23,31,38,39,42\}$ \\
65&$\{1,2,4,12,27,28,31,34,42,53\}$ &
66&$\{1,2,3,7,8,14,34,36,42,52\}$ \\
67&$\{1,2,3,4,28,29,32,37,38,46\}$ &
68&$\{1,2,3,17,24,34,45,46,48,49\}$ \\
69&$\{1,2,3,8,9,12,14,20,42,46\}$ &
70&$\{1,2,3,4,13,18,36,37,52,55\}$ \\
\noalign{\hrule height0.8pt}
\end{tabular}
}
\end{center}
\end{table}

\begin{table}[thb]
\caption{Automorphism groups of $D_i$ ($i=1,2,\ldots,70$)}
\label{Tab:56Aut}
\begin{center}
{\footnotesize
\begin{tabular}{c|l}
\noalign{\hrule height0.8pt}
$|\Aut|$ &  \multicolumn{1}{c}{$i$} \\
\hline
1 &  
2, 3, 4, 7, 8, 9, 10, 11, 12, 14, 15, 16, 17, 18, 19, 20, 21, 22, 23, 25, 27, \\
& 31, 34, 37, 41, 43, 44, 45, 47, 48, 49, 55, 57, 58, 61, 63, 67, 68, 70 \\
\hline
2 &
1, 5, 6, 13, 24, 26, 28, 29, 30, 32, 33, 35, 36, 39, 40, 42, 46, 50, 51, 52, \\
& 53, 54, 56, 59, 60, 62, 64, 65, 66, 69 \\
\hline
6 &  38 \\
\noalign{\hrule height0.8pt}
\end{tabular}
}
\end{center}
\end{table}

By comparing the automorphism groups of $D_i$ $(i=1,2,\ldots,70)$
and $E$, 
we see that these $71$ codes are pairwise inequivalent.
Therefore, we have the following:

\begin{prop}
There are at least $71$ $s$-extremal
singly even self-dual $[56,28,10]$ codes.
\end{prop}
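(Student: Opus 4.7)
The plan is to separate the codes into the $70$ constructed from $C_{56,1}$ and the single code $E$ constructed from $BYB$, and then use the order of the automorphism group as an invariant to rule out any coincidence between the two families. The equivalences \emph{within} the first family have already been settled by the {\sc Magma} computation: the $2925$ cosets of minimum weight $10$ in $C_{56,1}$ give, via Theorem~\ref{thm}, $2925$ $s$-extremal codes, and these fall into exactly $70$ equivalence classes, labelled $D_1,\dots,D_{70}$. Likewise, the $6552$ cosets in $BYB$ form a single orbit under $\Aut(BYB)$, so all resulting $s$-extremal codes are equivalent to $E$.

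The next step is to compare $E$ against each $D_i$. Since $|\Aut(\cdot)|$ is invariant under code equivalence, it suffices to consult Table~\ref{Tab:56Aut}: every $D_i$ has $|\Aut(D_i)|\in\{1,2,6\}$, while the {\sc Magma} computation records $|\Aut(E)|=9$. Because $9\notin\{1,2,6\}$, the code $E$ cannot be equivalent to any of $D_1,\dots,D_{70}$, and the $71$ codes are pairwise inequivalent. Combined with the fact that each of them is $s$-extremal by Theorem~\ref{thm}, this yields the stated lower bound of $71$.

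There is essentially no analytical obstacle here; the argument is a bookkeeping step on top of the construction and of Theorem~\ref{thm}. The real work is computational and has already been done: verifying that the $2925$ neighbors of $C_{56,1}$ split into exactly $70$ equivalence classes, checking that $\Aut(BYB)$ acts transitively on the $6552$ weight-$10$ cosets, and computing the orders listed in Table~\ref{Tab:56Aut} together with $|\Aut(E)|=9$. The only subtle point worth double-checking is that the invariant used — the order of the automorphism group — cleanly separates $E$ from every $D_i$; any overlap among the orders would have required a finer invariant (for example, weight-distribution-based refinements of the permutation action, or a direct equivalence test in {\sc Magma}), but the value $9$ happens not to appear in Table~\ref{Tab:56Aut}, so this finishes the proof.
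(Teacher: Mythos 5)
Your proposal is correct and follows essentially the same route as the paper: the $70$ classes $D_1,\dots,D_{70}$ and the single class $E$ are obtained via Theorem~\ref{thm} from the weight-$10$ cosets of $C_{56,1}$ and $BYB$, and the comparison of automorphism group orders ($1$, $2$, $6$ versus $9$) is exactly the invariant the paper uses to conclude that all $71$ codes are pairwise inequivalent.
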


The existence of such self-dual codes was not previously 
known (see~\cite{BG}).

\section{Covering radii of some extremal doubly even 
self-dual codes of length 80}

In this section, we consider the case $k=3$.
The covering radii of some extremal doubly even self-dual codes of
length $80$ are determined.

The extended quadratic residue 
code $QR_{80}$ of length $80$ is an extremal doubly even self-dual code.
It was shown in~\cite{DH} that there are $11$ extremal
doubly even self-dual $[80,40,16]$ codes with an automorphism of
order 19, up to equivalence.  
The $11$ codes are denoted by $C_{80,1}, C_{80,2},\ldots,C_{80,11}$
in~\cite{DH}.
Also, none of them is equivalent to $QR_{80}$.
There are six (resp.\ five) inequivalent pure (resp.\ bordered)
double circulant extremal doubly even self-dual codes of 
length $80$~\cite{GH-DCC88}.
These codes are denoted by $P_{80,i}$ $(i=1,2,\ldots,6)$
(resp.\ $B_{80,i}$ $(i=1,2,\ldots,5)$) in~\cite{GH-DCC88}.
Note that $P_{80,1}$ and $B_{80,5}$ are equivalent.
Also, we verified by {\sc Magma}~\cite{Magma} 
that $P_{80,1}$ is equivalent to $QR_{80}$.
Very recently, $14$ more inequivalent extremal doubly even
self-dual codes of length $80$ have been found in \cite{KY16}.
These codes are denoted by 
$\LL_{80,i}$ $(i=1,2,\ldots,14)$ in~\cite[Table~11]{KY16}.
Hence, at least $35$ inequivalent extremal doubly even
self-dual codes of length $80$ exist\footnote{The number of
known extremal doubly even self-dual codes of length $80$
is incorrectly reported as $36$ in \cite[Theorem~6.1]{KY16}.}.

By the method given in~\cite{HK}, we found two
extremal doubly even self-dual codes
$N_{80,1}$ and $N_{80,2}$ of length $80$.
The code $N_{80,i}$ $(i=1,2)$
has generator matrix $(\ I_{40}\ \ M_{80,i}\ )$, 
where $I_{40}$ denotes the identity matrix of order $40$ and 
$M_{80,1}$ (resp.\ $M_{80,2}$) is listed in 
Figure~\ref{Fig:801} (resp.\ Figure~\ref{Fig:802}).
In the figures,
each row of $M_{80,i}$ is written in octal using $0=(000)$,
$1=(001),\ldots,6=(110)$ and $7=(111)$, together with 
$a=(0)$ and $b=(1)$, where, for example, 
the $(1,2)$-entry indicates the second row and 
the $(2,1)$-entry indicates the 5th row.
We verified by {\sc Magma}~\cite{Magma} 
that $N_{80,1}$  and $N_{80,2}$ 
have automorphism groups of orders $32$ and $64$, respectively.
Since none of the $35$ known 
inequivalent extremal doubly even self-dual codes of length $80$
has automorphism group of order $32,64$,
we have the following:

\begin{prop}
There are at least $37$ inequivalent extremal doubly even self-dual 
codes of length $80$. 
\end{prop}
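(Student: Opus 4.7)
The plan is to argue purely by the invariant given by the order of the automorphism group. Since equivalent codes have automorphism groups of the same order, it suffices to exhibit two new codes whose automorphism group orders do not appear among the orders attached to the $35$ previously known extremal doubly even self-dual codes of length $80$, and also differ from each other.

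First, I would record the constructions of the two candidate codes $N_{80,1}$ and $N_{80,2}$ explicitly via their generator matrices $(\,I_{40}\ M_{80,i}\,)$, as laid out in Figures~\ref{Fig:801} and \ref{Fig:802}, and verify (by \textsc{Magma}) that each of these codes is doubly even, self-dual, and has minimum weight $16$, so that both are extremal. Next, I would compute the orders of their automorphism groups, obtaining $|\Aut(N_{80,1})|=32$ and $|\Aut(N_{80,2})|=64$. These two orders differ, so $N_{80,1}$ and $N_{80,2}$ are inequivalent to each other.

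For the comparison with the previously known codes, I would then go through the list of $35$ known inequivalent extremal doubly even self-dual codes of length $80$, namely $QR_{80}$, the eleven codes $C_{80,1},\dots,C_{80,11}$ from~\cite{DH}, the double circulant codes $P_{80,i}$ and $B_{80,j}$ from~\cite{GH-DCC88} (after removing the duplications $P_{80,1}\cong B_{80,5}\cong QR_{80}$), and the fourteen codes $\LL_{80,1},\dots,\LL_{80,14}$ from~\cite{KY16}, and check in each case that the order of the automorphism group is different from $32$ and from $64$. Since equivalent codes share automorphism groups up to conjugation and hence have the same order, this rules out equivalence of $N_{80,1}$ or $N_{80,2}$ with any of the $35$ previously known codes, yielding at least $35+2=37$ inequivalent extremal doubly even self-dual codes of length $80$.

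The main obstacle is computational rather than structural: one must trust (or redo with \textsc{Magma}) the determination of $|\Aut(N_{80,i})|$ and the verification that none of the $35$ previously known codes has automorphism group of order $32$ or $64$. In particular, the automorphism group orders of the $\LL_{80,i}$ from~\cite{KY16} must be consulted directly, and the fact that $P_{80,1}$ is equivalent to $QR_{80}$ (verified earlier in the section) must be accounted for so that the count of previously known codes is correctly $35$ rather than $36$.
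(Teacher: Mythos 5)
Your proposal matches the paper's argument exactly: the paper also constructs $N_{80,1}$ and $N_{80,2}$ via the generator matrices of Figures~\ref{Fig:801} and \ref{Fig:802}, computes their automorphism group orders ($32$ and $64$) with {\sc Magma}, and notes that none of the $35$ previously known inequivalent extremal doubly even self-dual codes of length $80$ has automorphism group of either order, giving $35+2=37$. The approach and the computational burden you identify are the same as in the paper.
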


By the Assmus--Mattson theorem, 
the supports of the codewords of minimum weight in an extremal
doubly even self-dual code of length $80$ form a
self-orthogonal $3$-$(80,16,665)$ design.
We verified that each of the $37$ extremal doubly even self-dual 
codes of length $80$ is generated by the codewords
of minimum weight.
Hence, we have the following:

\begin{cor}
There are at least $37$ non-isomorphic self-orthogonal
$3$-$(80,16,665)$ designs having $2$-rank $40$.
\end{cor}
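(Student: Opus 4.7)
The plan is to derive the corollary directly from the proposition that there exist at least $37$ inequivalent extremal doubly even self-dual codes of length $80$, together with the verified fact that each of these codes is generated by its codewords of minimum weight. For each of the $37$ codes $C$, the Assmus--Mattson theorem (invoked just before the corollary) produces a $3$-$(80,16,665)$ design $\cD(C)$ whose blocks are the supports of the weight-$16$ codewords of $C$. The task is then to show that each $\cD(C)$ is self-orthogonal, has $2$-rank $40$, and that inequivalent codes yield non-isomorphic designs.

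First I would observe that self-orthogonality of $\cD(C)$ is immediate from the fact that $C$ is doubly even and self-dual: any two minimum-weight codewords $u,v \in C$ satisfy $u\cdot v=0$, so $|\supp(u)\cap\supp(v)|$ is even, which means the $\FF_2$-span of the characteristic vectors of the blocks of $\cD(C)$ is a self-orthogonal binary code. Next, the $2$-rank of $\cD(C)$ is by definition the $\FF_2$-dimension of this span, which is precisely the dimension of the subcode of $C$ generated by its codewords of minimum weight. Since the corollary's hypothesis (verified by {\sc Magma}) states that $C$ is generated by its minimum-weight codewords, this dimension equals $\dim C = 40$, giving $2$-rank $40$.

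The step that requires the most care is showing that inequivalent codes yield non-isomorphic designs. The key point is that $C$ can be recovered from $\cD(C)$ as the $\FF_2$-span of the characteristic vectors of the blocks, again using the fact that $C$ is generated by its weight-$16$ codewords. Consequently, a permutation of the $80$ coordinates inducing an isomorphism $\cD(C) \to \cD(C')$ would map the block set of one design to that of the other, and hence induce an equivalence $C \to C'$. Therefore distinct equivalence classes of codes among the $37$ produce distinct isomorphism classes of designs, completing the proof.

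I do not anticipate a serious obstacle: once the ``generated by minimum-weight codewords'' hypothesis is in hand (verified computationally), the argument is purely formal. The only subtle point is the reconstruction of $C$ from $\cD(C)$, which is precisely what makes the passage from $37$ inequivalent codes to $37$ non-isomorphic designs valid.
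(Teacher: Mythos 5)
Your proposal is correct and follows essentially the same route as the paper: the designs come from the Assmus--Mattson theorem, the $2$-rank $40$ and the non-isomorphism of the $37$ designs both rest on the computationally verified fact that each code is generated by its minimum-weight codewords (so the code is recoverable from the design). The paper states this very tersely in the two sentences preceding the corollary, whereas you spell out the reconstruction argument explicitly, but the underlying ideas coincide.
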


%

\begin{figure}[thb]
\centering
{\footnotesize
0040734771561a
4434543163675b
2602474666733b
1715423124350b\\
0352004645161a
3206160642302a
2660112641633b
6053127640067a\\
7431246537036b
4577441737365b
2663035540577b
1725203477272b\\
0346314420530a
4577753007251a
1114607163256a
4052516666522a\\
1346325653523b
7600030245123a
0463176442323a
4625262036154b\\
2706324200063a
5757747317034b
2373176750413a
5561262173200b\\
5513453515272b
2251030451530a
2247576744126b
6240715602721b\\
4243224261422a
6535307327614a
7642356744703a
7335772175344b\\
3142160261567a
5075665327676b
2022127744732a
2372131202427b\\
6216136061561a
0264135550102a
4526643473044a
1130243355350b\\
}
\caption{Matrix $M_{80,1}$ (in octal)}
\label{Fig:801}
\end{figure}

\begin{figure}[thb]
\centering
{\footnotesize
2017612646116b
1314312723575b
7122225026433b
4035272764050b\\
5472655005261a
6526731002002a
7140743001533b
4004001777410b\\
5466360400441a
6520567600712a
7143664300677b
4005452237172b\\
5466545260630a
1257102647151a
3143721054621b
6005430751155b\\
3311203764154a
2120661405223a
5343727202023a
6672344101523a\\
0751202337414b
0077116557334b
7453727110713a
7536344044677a\\
0233202355172b
0206116566147b
7567327104226b
3560144042421b\\
6214302356055b
3215556567514a
2162507104403a
2415123735044b\\
6662731421667a
7022743210201a
0075001673345b
7452760442727b\\
4241010156116b
2233013467575b
1206012233344a
4610412515050b\\
}
\caption{Matrix $M_{80,2}$ (in octal)}
\label{Fig:802}
\end{figure}

We determine the covering radii of the codes 
$C_{80,i}$ $(i=1,2,\ldots,11)$ in~\cite{DH}, 
$P_{80,i}$ $(i=1,2,\ldots,6)$,
$B_{80,i}$ $(i=1,2,3,4)$ in~\cite{GH-DCC88},
$\LL_{80,i}$ $(i=1,2,\ldots,14)$ in~\cite{KY16}
and the new codes $N_{80,i}$ $(i=1,2)$ as follows.
By the Delsarte bound, the covering radius of an
extremal doubly even self-dual code of length $80$
is at most $14$.
By modifying the method in~\cite{Munemasa}, we verified that
there is no coset of minimum weight $13$ and 
there is a coset of minimum weight $12$ for the codes
$C_{80,i}$ $(i=1,2,\ldots,11)$, 
$P_{80,i}$ $(i=1,2,\ldots,6)$, $B_{80,i}$ $(i=1,2,3)$,
$\LL_{80,i}$ $(i=1,2,\ldots,14)$
and $N_{80,i}$ $(i=1,2)$.
In this calculation, the automorphism groups are useful
to reduce the number of cosets which must be considered.
Also, we verified that
there is no coset of minimum weight $14$ and
there is a coset of minimum weight $13$ for the code $B_{80,4}$.
Hence, we have the following:

\begin{prop}
The covering radii of the codes 
$C_{80,i}$ $(i=1,2,\ldots,11)$, 
$P_{80,i}$ $(i=1,2,\ldots,6)$, $B_{80,i}$ $(i=1,2,3)$,
$\LL_{80,i}$ $(i=1,2,\ldots,14)$
and 
$N_{80,i}$ $(i=1,2)$ are $12$.
The covering radius of the code $B_{80,4}$
is $13$.
\end{prop}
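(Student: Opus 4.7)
The plan is code-by-code and computer-assisted. Since each listed code $C$ is an extremal doubly even self-dual $[80,40,16]$ code, it contains the all-ones vector (as $80 \equiv 0 \pmod 8$), so its weight enumerator is supported on the $14$ nonzero weights $16,20,\ldots,60,64,80$ (the pairing $w\mapsto 80-w$ identifies $68,72,76$ with $12,8,4$, which lie below the minimum weight). The Delsarte bound therefore gives $R(C)\le 14$ at the outset, so for each code only the values $R\in\{12,13,14\}$ need to be separated, and for $B_{80,4}$ only $R\in\{13,14\}$.

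For the lower bounds I would produce, by an initial short search, an explicit vector $v\in\FF_2^{80}$ with $d(v+C)=12$ (respectively $13$ for $B_{80,4}$), and verify the minimum by a standard minimum-weight computation inside the $[80,41]$ code $C\cup(v+C)$, using the known weight distribution of $C$ itself. This yields $R(C)\ge 12$ (respectively $\ge 13$) in each case.

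For the upper bounds I would adapt the method of~\cite{Munemasa}. The group $\Aut(C)$ acts on the $2^{40}$ cosets of $C$, and the coset minimum-weight function $d(\cdot)$ is invariant on each orbit, so to exclude a value $w$ it suffices to enumerate one representative of each $\Aut(C)$-orbit on weight-$w$ vectors in $\FF_2^{80}$ and exhibit, for each such representative $v$, a codeword $c\in C$ with $\wt(v+c)<w$. Concretely, to prove $R(C)\le 12$ one dispatches both $w=13$ and $w=14$; for $B_{80,4}$ only $w=14$ can be dispatched this way, while the $w=13$ pass produces a surviving coset and hence $R(B_{80,4})\ge 13$, forcing the split case in the statement.

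The principal obstacle is sheer computational volume. Codes with large automorphism groups (such as $QR_{80}$) produce few orbits and finish quickly, but codes with small stabilizers, notably $N_{80,1}$ with $|\Aut(N_{80,1})|=32$, produce many orbits at weights $13$ and $14$ and are feasible only with aggressive early termination (dispatching an orbit as soon as a weight-$<w$ vector is exhibited in the coset) and with efficient canonical-form recognition. Once the enumerations have been completed for every listed code the proposition follows by tabulation.
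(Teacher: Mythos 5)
Your proposal is correct and follows essentially the same route as the paper: the Delsarte bound gives $R\le 14$, a computer search over $\Aut(C)$-orbit representatives (adapting the method of Munemasa) rules out cosets of minimum weight $13$ (resp.\ $14$ for $B_{80,4}$), and explicit cosets of minimum weight $12$ (resp.\ $13$) supply the lower bounds. The only substantive difference is that your separate pass at $w=14$ for the covering-radius-$12$ codes is redundant: a coset of minimum weight $14$ would yield one of minimum weight $13$ by flipping a single coordinate of a coset leader, so excluding minimum weight $13$ already forces $R\le 12$, which is all the paper checks.
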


A vector $a$ of a coset $U$ is called a coset leader of $U$ if
the weight of $a$ is minimal in $U$.
We give the support of a coset leader of a
coset of minimum weight $13$ in $B_{80,4}$:
\[
\{ 2, 5, 8, 11, 14, 17, 20, 23, 26, 29, 32, 35, 38\}.
\]

The covering radii of extremal doubly even self-dual codes of
length $80$ are determined for the first time.
The above proposition indicates that 
there are many extremal doubly even self-dual codes with
covering radius not meeting the Delsarte bound.
It is an open problem to determine whether there is
an extremal doubly even self-dual $[80,40,16]$ code with
covering radius meeting the Delsarte bound or not. 
Equivalently, it is an open problem to determine whether
there is an $s$-extremal
singly even self-dual $[80,40,14]$ code or not.

By Corollary~2 to Theorem~1 in~\cite{A-P}, 
the cosets of minimum weights $13$ and $14$, if there are any, 
have unique weight enumerators.
We verified by {\sc Magma}~\cite{Magma} 
that the unique weight enumerator of a coset of minimum weight $13$ 
in $B_{80,4}$ is as follows:
\begin{align*}
&
          560y^{13}
+        12624y^{15}
+       184080y^{17}
+      2103920y^{19}
+     18380960y^{21}
\\ &
+    124273760y^{23}
+    661003616y^{25}
+   2796701600y^{27}
+   9492488720y^{29}
\\ &
+  26027430000y^{31}
+  57969919280y^{33}
+ 105320725328y^{35}
\\ &
+ 156558001600y^{37}
+ 190784587840y^{39}
+ \cdots.
\end{align*}
%
By using~\cite[(10), (11)]{C-S},
the weight enumerators of an $s$-extremal singly even self-dual
$[80,40,14]$ code and its shadow
are determined as follows:
\begin{align*}
&
1 
+ 3200  y^{14} 
+ 47645  y^{16} 
+ 640640  y^{18} 
+ 6452992  y^{20} 
+ 49304320  y^{22} 
\\ &
+ 294979360  y^{24} 
+ 1398270720  y^{26} 
+ 5294263040  y^{28} 
+ 16137190784  y^{30} 
\\ &
+ 39853463650  y^{32} 
+ 80135036800  y^{34} 
+ 131652451840  y^{36} 
\\ &
+ 177157460480  y^{38} 
+ 195552496832  y^{40} 
+ \cdots,
\\&
99840  y^{16} 
+ 12859392  y^{20} 
+ 590187520  y^{24} 
+ 10587822080  y^{28} 
\\ &
+ 79708428800  y^{32} 
+ 263302574080  y^{36} 
+ 391107684352  y^{40} 
+ \cdots,
\end{align*}
respectively.
By Corollary~\ref{cor}, 
a coset of minimum weight $14$ of an extremal doubly even
self-dual code of length $80$, if there is any, 
has the following weight enumerator:
\begin{align*}
&
3200  y^{14} 
+ 49920 y^{16} 
+ 640640  y^{18} 
+ 6429696 y^{20} 
+ 49304320  y^{22} 
\\ &
+ 295093760 y^{24} 
+ 1398270720  y^{26} 
+ 5293911040 y^{28} 
+ 16137190784  y^{30} 
\\ &
+ 39854214400 y^{32} 
+ 80135036800  y^{34} 
+ 131651287040 y^{36} 
\\ &
+ 177157460480  y^{38} 
+ 195553842176 y^{40} 
+ \cdots.
\end{align*}

\section{Non-existence of certain extremal doubly even self-dual codes
of length $24k+8$}


\begin{thm}
There is no $s$-extremal singly even self-dual $[24k+8,12k+4,4k+2]$ 
code for $k\ge 137$.
Equivalently, 
there is no extremal doubly even self-dual
$[24k+8,12k+4,4k+4]$ code
with covering radius $4k+2$, meeting the Delsarte bound for $k\ge 137$.
\end{thm}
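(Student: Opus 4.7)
The plan is to use the equivalence established in Theorem~\ref{thm} to reduce to showing the non-existence of $s$-extremal singly even self-dual $[24k+8,12k+4,4k+2]$ codes for $k \ge 137$, and then derive a contradiction from the non-negativity of weight enumerator coefficients. The starting point is the uniqueness, noted immediately above Corollary~\ref{cor}, of the weight enumerator of an $s$-extremal code and its shadow: once the length $24k+8$ and the parameters $d(C) = 4k+2$, $d(S) = 4k+4$ are fixed, the Bachoc--Gaborit framework~\cite{BG}, together with the Conway--Sloane formulas (10), (11) of~\cite{C-S} already exploited in Section~\ref{sec:56}, determines $W_C(y) = \sum_i A_i y^i$ and $W_S(y) = \sum_i B_i y^i$ as explicit polynomials in $y$ whose coefficients are themselves polynomials in $k$.

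First I would write $W_C$ in a Gleason-type basis compatible with the shadow structure and impose the $s$-extremal constraints $A_i = 0$ for $1 \le i < 4k+2$ together with $B_j = 0$ for $1 \le j < 4k+4$. These linear conditions pin down every free parameter in the Gleason expansion, giving a closed-form polynomial in $k$ for each coefficient $A_i$ of the code and $B_j$ of the shadow.

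Next I would inspect the small coefficients $A_{4k+2},A_{4k+4},A_{4k+6},\ldots$ and $B_{4k+4},B_{4k+8},\ldots$, each of which must be a non-negative integer since it counts codewords or shadow vectors. The goal is to locate one whose polynomial in $k$ becomes negative precisely when $k$ crosses the threshold $137$. Having isolated this critical coefficient, I would verify its sign directly at $k = 137$ and then argue, by comparing the dominant term against the lower-order terms or by checking that its real roots are all smaller than $137$, that the coefficient stays negative for every $k \ge 137$, producing the required contradiction.

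The main obstacle will be the bookkeeping needed to derive the explicit polynomial formula for the critical coefficient: because the degree of $W_C$ grows linearly with $k$, extracting a tractable expression for a coefficient near $4k+4$ requires careful manipulation of the Gleason basis, most naturally carried out in computer algebra. Once the critical polynomial is in hand, the sign analysis is routine, and the parallel treatment already used in~\cite[Proposition~4]{HM07} for the companion case of length $24k+16$ (producing the bound $k \ge 148$) serves as a direct template for the argument.
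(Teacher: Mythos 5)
Your overall reduction---pass to the $s$-extremal singly even code via the neighbor correspondence of Theorem~\ref{thm}, pin down its uniquely determined weight enumerator and shadow enumerator from the Conway--Sloane formulas, and derive a contradiction from a negative coefficient---matches the finite part of the paper's argument. The genuine gap is in how you propose to cover the infinite range $k\ge 137$. You treat the ``critical coefficient'' as a single polynomial in $k$ whose negativity for all $k\ge 137$ can be settled by bounding its real roots; but the coefficients $A_{4k+2+2j}$ (for fixed $j$) of these extremal enumerators are not polynomials in $k$ of bounded degree---they are B\"urmann--Lagrange-type expressions built from products of binomial coefficients---the index of the coefficient that goes negative can shift with $k$, and establishing negativity for all sufficiently large $k$ is precisely the delicate asymptotic analysis of the kind Zhang carried out for extremal doubly even codes; it is not a routine sign check.

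The paper sidesteps this entirely with a two-pronged argument you are missing. First, Zhang's non-existence theorem~\cite{Zhang} rules out extremal doubly even self-dual codes of length $24k+8$ for $k\ge 159$; since by Theorem~\ref{thm} an $s$-extremal singly even $[24k+8,12k+4,4k+2]$ code would have extremal doubly even neighbors, no such $s$-extremal code exists for $k\ge 159$ either. This reduces the problem to the finitely many cases $137\le k\le 158$, each of which is then handled by an explicit (machine) computation of the uniquely determined weight enumerator, which exhibits a negative coefficient in every one of those cases. Without invoking Zhang's result (or supplying an equivalent asymptotic argument of your own), your plan leaves the range $k\ge 159$ unproved.
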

\begin{proof}
Zhang~\cite{Zhang} showed the non-existence of an extremal doubly even 
self-dual code of length $24k+8$ for $k \ge 159$.
Hence, by Theorem~\ref{thm}, 
there is no $s$-extremal singly even self-dual $[24k+8,12k+4,4k+2]$ code
for $k \ge 159$.

By using~\cite[(10), (11)]{C-S},
we numerically determined
the weight enumerators of an $s$-extremal 
singly even self-dual
$[24k+8,12k+4,4k+2]$ code $C$ and its shadow for each $k \le 158$.
This calculation was done by 
the program written in {\sc Magma}~\cite{Magma}
which is listed in~\ref{appendix}.
Then we find that
the weight enumerator of $C$ contains
a negative coefficient for $k=137,138,\ldots,158$.
The first assertion follows.
The second assertion follows from Theorem~\ref{thm}
immediately.
\end{proof}

\section{Non-existence of certain extremal doubly even self-dual codes
of length $24k+16$}

The argument in the previous section can be applied to 
$s$-extremal singly even self-dual $[24k+16,12k+8,4k+4]$
codes and extremal doubly even self-dual 
$[24k+16,12k+8,4k+4]$ codes
with covering radius $4k+4$, meeting the Delsarte bound.

\begin{lem}[Harada and Munemasa~\cite{HM07}]\label{lem:24k+16}
There is an $s$-extremal singly even self-dual $[24k+16,12k+8,4k+4]$
code if and only if there is an extremal doubly even self-dual 
$[24k+16,12k+8,4k+4]$ code
with covering radius $4k+4$, meeting the Delsarte bound.
\end{lem}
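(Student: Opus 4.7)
The plan is to adapt the argument of Theorem~\ref{thm} to length $24k+16$, reusing the four-coset decomposition $C_0^\perp = C_0 \cup C_1 \cup C_2 \cup C_3$ associated to a singly even self-dual code $C = C_0 \cup C_2$ with shadow $S = C_1 \cup C_3$ and doubly even neighbors $C_0 \cup C_1$ and $C_0 \cup C_3$.

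For the forward direction, suppose $C$ is an $s$-extremal singly even self-dual $[24k+16,12k+8,4k+4]$ code. The bound~\eqref{eq:BG} yields $d(S) = (24k+16)/2 + 4 - 2(4k+4) = 4k+4$, so $d(C_1), d(C_3) \ge 4k+4$; combined with $d(C_0) \ge 4k+4$, both neighbors have minimum weight $\ge 4k+4$, and hence exactly $4k+4$ by Rains' bound, so each is extremal. Because the extremal doubly even weight enumerator in length $24k+16$ is uniquely determined, the two neighbors share the same weight enumerator, which forces $C_1$ and $C_3$ to have identical weight enumerators, in particular $d(C_1) = d(C_3) = 4k+4$. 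The cosets $C_2 \cup C_3$ of $C_0 \cup C_1$ and $C_1 \cup C_2$ of $C_0 \cup C_3$ therefore each contain vectors of weight exactly $4k+4$, so each neighbor has covering radius at least $4k+4$. Since the Delsarte bound for an extremal doubly even $[24k+16,12k+8,4k+4]$ code equals $4k+4$ (a direct computation from Gleason's description of the uniquely determined extremal weight enumerator), equality holds and the Delsarte bound is met.

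For the converse, suppose $D$ is an extremal doubly even self-dual $[24k+16,12k+8,4k+4]$ code whose covering radius $4k+4$ meets the Delsarte bound. Choose $v \in \FF_2^{24k+16}$ with $v+D$ of minimum weight $4k+4$, set $D_0 = \{x \in D \mid v \cdot x = 0\}$, and form $D_1, D_2, D_3$ in the usual way, invoking~\cite[Lemma~3]{BP} to identify $D_1$ and $D_3$ with the weight-$\equiv 2$ and weight-$\equiv 0 \pmod 4$ parts of $v+D$ so that $D_0 \cup D_1$ is singly even self-dual with shadow $D_2 \cup D_3$. The minimum weight of $D_0 \cup D_1$ is at least $4k+4$, since each $D_i$ has minimum weight $\ge 4k+4$, and at most $4k+4$ by Rains' bound, hence equal to $4k+4$. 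The shadow $D_2 \cup D_3$ has minimum weight at least $4k+4$ and at most $4k+4$ by~\eqref{eq:BG}, giving $d(D_2 \cup D_3) = 4k+4$. Thus $D_0 \cup D_1$ is $s$-extremal.

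The main subtlety distinguishing this lemma from Theorem~\ref{thm} is that here $C$ and its doubly even neighbors share the same minimum weight $4k+4$, so the minimum-weight codewords of $C$ lie in $C_0$ rather than in $C_2$. The argument that the neighbors have covering radius $4k+4$ therefore cannot rely on $d(C_2) = 4k+2$ (as in Theorem~\ref{thm}) and must instead be deduced from the shadow minimum weight $d(S) = 4k+4$ together with the equality $d(C_1) = d(C_3)$ forced by uniqueness of the extremal doubly even weight enumerator. The remaining ingredient -- that the Delsarte bound for extremal doubly even $[24k+16,12k+8,4k+4]$ codes is exactly $4k+4$ -- is a computational fact about the number of nonzero weights in the extremal weight enumerator and, although routine, is what lets ``covering radius $4k+4$'' and ``meeting the Delsarte bound'' coincide.
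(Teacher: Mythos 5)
Your proof is correct and follows essentially the same neighbor/four-coset argument that the paper uses for Theorem~\ref{thm}; the paper itself does not reprove this lemma but simply cites \cite[Proposition~4]{HM07}, where the argument is exactly this adaptation. The two points you single out---that the shadow minimum weight $4k+4$ now lands in the doubly even part of the coset rather than in $C_2$, and that the Delsarte bound for an extremal doubly even $[24k+16,12k+8,4k+4]$ code equals $4k+4$---are the right subtleties, and both are handled correctly.
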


\begin{thm}
There is no
$s$-extremal singly even self-dual $[24k+16,12k+8,4k+4]$ code
for $k \ge 148$. 
Equivalently, 
there is no extremal doubly even self-dual
$[24k+16,12k+8,4k+4]$ code
with covering radius $4k+4$, meeting the Delsarte bound
for $k \ge 148$. 
\end{thm}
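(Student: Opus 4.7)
The plan is to mirror, essentially verbatim, the proof of the theorem in the previous section, using Lemma~\ref{lem:24k+16} in place of Theorem~\ref{thm} to pass between the $s$-extremal singly even formulation and the extremal doubly even formulation with covering radius meeting the Delsarte bound. In particular, the equivalence of the two statements in the theorem is immediate from Lemma~\ref{lem:24k+16}, so it suffices to rule out $s$-extremal singly even self-dual $[24k+16,12k+8,4k+4]$ codes for $k \ge 148$.

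First, I would invoke the non-existence result for extremal doubly even self-dual codes of length $24k+16$ (the analogue of Zhang's theorem cited in the previous proof) to handle all sufficiently large $k$, say $k \ge K_0$ for some explicit constant $K_0$. Since any $s$-extremal singly even self-dual $[24k+16,12k+8,4k+4]$ code has an extremal doubly even self-dual neighbor by Lemma~\ref{lem:24k+16}, the non-existence of the latter immediately rules out the former.

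Next, for each $k$ in the finite range $148 \le k < K_0$, I would use the Conway--Sloane recursion \cite[(10), (11)]{C-S} to compute the (uniquely determined, by~\cite{BG}) weight enumerator of a hypothetical $s$-extremal singly even self-dual $[24k+16,12k+8,4k+4]$ code and of its shadow. The combinatorial rigidity of these enumerators means that if any coefficient comes out negative, no such code can exist. The expectation, matching what happened in the $24k+8$ case, is that a negative coefficient appears for every $k$ in this range, and that $k=148$ is precisely the smallest value for which this first occurs. This numerical step can be carried out by a short {\sc Magma} program essentially identical to the one used for length $24k+8$.

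The main obstacle is computational rather than conceptual: one must verify that the negative-coefficient phenomenon persists for \emph{every} single $k$ in the range $148 \le k < K_0$, with no gap that would require a separate argument. Once this is confirmed, the first assertion follows, and the second is then a formal consequence of Lemma~\ref{lem:24k+16}.
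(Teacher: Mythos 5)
Your proposal matches the paper's proof essentially step for step: the paper uses Zhang's non-existence result for extremal doubly even self-dual codes of length $24k+16$ (which gives $K_0=164$) together with Lemma~\ref{lem:24k+16} to dispose of $k\ge 164$, and then computes the uniquely determined weight enumerators via the Conway--Sloane formulas for each $k\le 163$, finding a negative coefficient (in the shadow's enumerator, as it turns out) for every $k$ in the range $148\le k\le 163$. The approach and the division of labor between the asymptotic and finite-range arguments are the same as in the paper.
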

\begin{proof}
Zhang~\cite{Zhang} showed the non-existence of an extremal doubly even 
self-dual code of length $24k+16$ for $k \ge 164$.
Hence, by Lemma~\ref{lem:24k+16},
there is no $s$-extremal singly even self-dual $[24k+16,12k+8,4k+4]$ code
for $k \ge 164$.

By using~\cite[(10), (11)]{C-S},
we numerically determined
the weight enumerators of an $s$-extremal 
singly even self-dual
$[24k+16,12k+8,4k+4]$ code $C$ and its shadow $S$ for each $k \le 163$.
This calculation was done by slightly modifying the program
given  in~\ref{appendix} to this case.
Then we find that
the weight enumerator of $S$ contains
a negative coefficient for $k=148,149,\ldots,163$.
The first assertion follows.
The second assertion follows from by Lemma~\ref{lem:24k+16}
immediately.
\end{proof}

\bigskip
\noindent
{\bf Acknowledgment.}
This work is supported by JSPS KAKENHI Grant Number 15H03633.



\appendix
\def\thesection{Appendix \Alph{section}}
\section{}\label{appendix}

\begin{verbatim}
Rxy<x,y>:=PolynomialRing(Rationals(),2);

S:=function(k)
 n:=24*k+8;
 n2:=n div 2;
 n8:=n div 8;
 d2:=2*k+1;
 basis:=[ (x^2+y^2)^(n2-4*j)*(x^2*y^2*(x^2-y^2)^2)^j 
  : j in [0..n8] ];
 basisCoeff:=Matrix(Rationals(),n8+1,d2,
  [ [ MonomialCoefficient(basis[i],x^(n-2*j)*y^(2*j))
  : j in [0..d2-1] ] : i in [1..n8+1] ] );
 sbasis:=[ (-1)^j*2^(n2-6*j)*(x*y)^(n2-4*j)*(x^4-y^4)^(2*j)
  : j in [0..n8] ];
 sbasisCoeff:=Matrix(Rationals(),n8+1,d2+1,
  [ [ MonomialCoefficient(sbasis[i],x^(n-2*j)*y^(2*j))
  : j in [0..d2] ] : i in [1..n8+1] ] );
 mat:=HorizontalJoin(basisCoeff,sbasisCoeff);
 rhsv:=Vector(Rationals(),Ncols(mat),
    [1] cat [ 0: i in [1..d2-1] ] cat [ 0: i in [0..d2] ]);
 sol,nulsp:=Solution(mat,rhsv);
 print Dimension(nulsp) eq 0;
 return [ &+[ sol[i]*basis[i] : i in [1..n8+1] ],
  &+[ sol[i]*sbasis[i] : i in [1..n8+1] ] ];
end function;

for k in [1..158] do
 WS:=S(k);
 print k,Minimum(Coefficients(WS[1]));
end for;
\end{verbatim}

\end{document}